\newtheorem{thm}{Theorem}[section]
\newtheorem{defn}[thm]{Definition}
\newtheorem{prop}[thm]{Proposition}
\newtheorem{cor}[thm]{Corollary}
\newtheorem{fact}[thm]{Fact}
\newcommand{\dminus}{ 
\buildrel\textstyle\ .\over{\hbox{ 
\vrule height3pt depth0pt width0pt}{\smash-} 
}}
\def \u{\mathbb U}
\newcommand{\Conf}{\mathrm{Conf}}
\newcommand{\thesentence}{\varphi_{\geq\frac{1}{2}}}
\def\indsym#1#2{%
  \setbox0=\hbox{$\m@th#1x$}%
  \kern\wd0%
  \hbox to 0pt{\hss$\m@th#1\mid$\hbox to 0pt{$\m@th#1^{#2}$}\hss}%
  \lower.9\ht0\hbox to 0pt{\hss$\m@th#1\smile$\hss}%
  \kern\wd0}
\newcommand{\ind}[1][]{\mathop{\mathpalette\indsym{#1}}}
\def\nindsym#1#2{%
  \setbox0=\hbox{$\m@th#1x$}%
  \kern\wd0%
  \hbox to 0pt{\hss$\m@th#1\not$\kern1.4\wd0\hss}
  \hbox to 0pt{\hss$\m@th#1\mid$\hbox to 0pt{$\m@th#1^{\,#2}$}\hss}%
  \lower.9\ht0\hbox to 0pt{\hss$\m@th#1\smile$\hss}%
  \kern\wd0}
\def \as{\operatorname{AS}}
\title{The almost sure theory of finite metric spaces}
\author{Isaac Goldbring, Bradd Hart, and Alex Kruckman}
\thanks{I. Goldbring was partially supported by NSF CAREER grant DMS-1349399. B. Hart was supported by an NSERC Discovery Grant.}
\address{Department of Mathematics\\University of California, Irvine, 340 Rowland Hall (Bldg.\# 400),
Irvine, CA 92697-3875}
\email{isaac@math.uci.edu}
\urladdr{http://www.math.uci.edu/~isaac}
\address{Department of Mathematics and Statistics, McMaster University, 1280 Main St., Hamilton ON, Canada L8S 4K1}
\email{hartb@mcmaster.ca}
\urladdr{http://ms.mcmaster.ca/~bradd/}
\address{Department of Mathematics and Computer Science, Wesleyan University, Science Tower 655, 265 Church St., Middletown, CT 06459-0128}
\email{akruckman@wesleyan.edu}
\urladdr{https://akruckman.faculty.wesleyan.edu/}
\begin{document}

\begin{abstract}
We establish an approximate zero-one law for sentences of continuous logic over finite metric spaces of diameter at most $1$. More precisely, we axiomatize a complete metric theory $T_{\as}$ such that, given any sentence $\sigma$ in the language of pure metric spaces and any $\epsilon>0$, the probability that the difference of the value of $\sigma$ in a random metric space of size $n$ and the value of $\sigma$ in any model of $T_{\as}$ is less than  $\epsilon$ approaches $1$ as $n$ approaches infinity.  We also establish some model-theoretic properties of the theory $T_{\as}$.
\end{abstract}
\maketitle

\section{Introduction}

The classical zero-one law for finite graphs~\cite{BH,Fagin} says that for any property of graphs which is definable by a sentence of first-order logic, either almost all finite graphs satisfy the property, or almost none of them do. More precisely, let $\mathcal{G}_n$ be the set of all graphs with vertex set $\{1,\dots,n\}$, and let $\mu_n$ be the uniform probability measure on the finite set $\mathcal{G}_n$. For any first-order sentence $\varphi$ in the language of graphs, write $[\varphi]_n$ for the set of graphs in $\mathcal{G}_n$ satisfying $\varphi$. Then: \begin{equation*}\label{eqn:graphs}\lim_{n\to\infty} \mu_n([\varphi]_n) = 0\text{ or }1.\end{equation*}

The proof can be summarized as follows: First, one writes down a set of \emph{extension axioms} and proves (by an enumerative argument) that any finitely many of these axioms hold for almost all finite graphs. Next, one proves (by a logical argument) that these extension axioms axiomatize a complete first-order theory, the \emph{almost sure theory} $T_{\as}$ of finite graphs. Completeness means that for any sentence $\varphi$, either $\varphi$ or its negation is in $T_{\as}$. It follows by the compactness theorem for first-order logic that either $\varphi$ or its negation is entailed by finitely many extension axioms, and hence holds in almost all finite graphs. 

Zero-one laws have been an active area of research since the 1970s. Generalizations have proceeded in three main directions: By considering more complicated classes of structures, by replacing the uniform measures $\mu_n$ with more general random models,  and by generalizing to more expressive logics than first-order. See~\cite{Compton} and~\cite{Winkler} for comprehensive surveys. 

In this paper, we initiate the study of zero-one laws for metric structures in continuous logic. Continuous logic is a generalization of first-order logic in which sentences define properties of metric spaces, possibly equipped with extra structure. In continuous logic, the standard truth values $\{\top,\bot\}$ are replaced by the unit interval $[0,1]$, and the primitive equality ($=$) is replaced by a metric ($d$). Since $x=y$ in a metric space if and only if $d(x,y) = 0$, we think of $0$ as playing the role of $\top$ (``true'') and $1$ as playing the role of $\bot$ (``false''). The canonical reference for continuous logic is~\cite{mtfms}. 

Throughout the paper, $L$ denotes the ``empty'' metric language, that is, the metric language consisting solely of the metric symbol $d$.  The words ``formula'' and ``sentence'' will be used as abbreviations for ``$L$-formula'' and ``$L$-sentence'' respectively.  All metric spaces will have diameter at most $1$ and will be viewed as $L$-structures.

To formulate a continuous zero-one law for finite metric spaces, we need to make two adjustments to the classical statement. First, since there are infinitely many metric spaces on the set $\{1,\dots,n\}$, we cannot make sense of a ``random finite metric space'' by using a naive uniform measure. Instead, we encode a metric space $X$ on $\{1,\dots,n\}$ by a point in $[0,1]^{\binom{n}{2}}$ (the sequence of distances between pairs of distinct points in $X$) and work with the probability measure $\nu_n$ which is the normalization of the Lebesgue measure on $[0,1]^{\binom{n}{2}}$ restricted to the set $M_n$ of points which encode metric spaces. 

Second, our zero-one law has an approximate character: instead of asking whether a sentence of continuous logic is true or false, it is more natural to ask whether its truth value lies in some subinterval of $[0,1]$. In Theorem~\ref{thm:main} below, we show that for any sentence $\sigma$, there is a truth value $r_\sigma\in [0,1]$ such that for almost all finite metric spaces $X$, the truth value of $\sigma$ in $X$ (denoted $\sigma^X$) is approximately $r_\sigma$. More precisely, for all $\epsilon>0$:
\begin{equation*}\label{eqn:metric}
\lim_{n\to \infty}\nu_n(\{X\in M_n \ : \ |\sigma^{X}-r_\sigma|<\epsilon\})=1.\end{equation*}

The proof of Theorem~\ref{thm:main} proceeds along the same lines as the proof of the classical zero-one law for finite graphs. In Section 2, we define ``approximate extension axioms'' and show that they hold almost surely in sufficiently large finite metric spaces.  In Section 3, we show that these extension axioms axiomatize a complete theory $T_{\as}$, the almost sure theory of finite metric spaces. Theorem~\ref{thm:main} follows by an approximate version of the compactness theorem for continuous logic (Fact~\ref{compactness}). The final section establishes further model-theoretic properties of the almost sure theory, including the fact that it has quantifier-elimination, has continuum many non-isomorphic separable models, and is unstable but is supersimple of U-rank $1$.

In Sections 2 and 3, we assume minimal familiarity with continuous logic as established in \cite[Sections 2-4]{mtfms}.  The final section will assume familiarity with more sophisticated model-theoretic notions.

We conclude this introduction with some remarks on pseudofiniteness and Fra\"iss\'e limits. In the case of the zero-one law for finite graphs, the almost-sure theory is equal to the complete theory of a particular countably infinite graph $G_R$: the random graph (also known as the Rado graph), which is the Fra\"iss\' e limit of the class of finite graphs. As a consequence, the random graph is \emph{pseudofinite}. This means that $G_R$ is elementarily equivalent to an ultraproduct of finite graphs; equivalently, every sentence which is true of $G_R$ has a finite model.

The class of finite metric spaces of diameter at most $1$ also has a Fra\"iss\' e limit: the \emph{Urysohn sphere $\u$}. In an earlier preprint, the first two authors claimed that the almost-sure theory of finite metric spaces $T_{\as}$ is equal to the complete theory of $\u$, and as a consequence, that $\u$ is pseudofinite.\footnote{In the continuous setting, a structure $M$ is pseudofinite if and only if it is elementarily equivalent to an ultraproduct of finite structures; equivalently, for any sentence $\sigma$ such that $\sigma^M = 0$ and for any $\epsilon>0$, there is some finite structure $A$ such that $\sigma^A<\epsilon$.} However, a serious flaw in the argument was discovered by the third author, and thus the question of the pseudofiniteness of the Urysohn sphere is still open.  

The difference between $T_{\as}$ and the complete theory of $\u$ rests on the fact that almost all finite metric spaces of diameter at most $1$ have all of their non-zero distances approximately in the interval $[\frac{1}{2},1]$. See Fact~\ref{CS} below for a more precise statement, which was proven by Kozma, Meyerovitch, Peled, and Samotij in \cite{KMPS}. A similar result in the discrete setting was proven by Mubayi and Terry in \cite{MT}. As a result, our axiomatization of $T_{\as}$ consists of extension axioms for metric spaces in which all distances are at least $\frac{1}{2}$, while the complete theory of $\u$ can be axiomatized by extension axioms for metric spaces with arbitrary distances in $[0,1]$.

It is worth drawing a parallel here with the zero-one laws for the classes $\mathcal{G}_n$ of $K_n$-free graphs ($n\geq 3$). A finite metric space can be viewed as a complete graph with edges labeled by $(0,1]$, omitting certain configurations (those edge-labeled triangles which would violate the triangle inequality). Requiring all non-zero distances to be at least $\frac{1}{2}$ ensures that there are no non-trivial instances of the triangle inequality, and hence is an efficient way of avoiding all forbidden configurations. From this perspective, the enumerative result from~\cite{KMPS} is analogous to the result of Erd\H{o}s, Kleitman, and Rothschild from~\cite{EKR} that almost all $K_n$-free graphs are $(n-1)$-partite, and the current paper is analogous to~\cite{KPR}, in which Kolaitis, Pr\"omel, and Rothschild proved that $\mathcal{G}_n$ has a zero-one law, with almost-sure theory axiomatized by extension axioms for $(n-1)$-partite graphs. 

The Fra\"{i}ss\'{e} limit of the class $\mathcal{G}_n$ is called the \emph{Henson graph} $H_n$, and the pseudofiniteness of the Henson graphs is a notorious open problem (see~\cite{Cherlin} for a detailed investigation into the case $n = 3$). It seems likely to us that resolving the question of pseudofiniteness of the Urysohn sphere will be difficult for similar reasons. 

The authors would like to thank Henry Towsner, Caroline Terry, and Gabriel Conant for helpful comments throughout the writing of this paper.  We would also like to thank the authors of \cite{KMPS} for access to their preprint.

\section{Approximate extension axioms}

Let $\mathcal C$ be the class of finite metric spaces in which the distance between any two distinct points lies in the interval $[\frac{1}{2},1]$.  Note that all such metric spaces are discrete, as any ball of radius $\frac{1}{4}$ consists just of its center.

Given a finite metric space $X=\{x_1,\ldots,x_n\}$, we let $\Conf_X(v_1,\ldots,v_n)$ denote the formula
\[
\max_{1\leq i < j\leq n} |d(x_i,x_j) - d(v_i,v_j)|.
\]

Here, ``$\Conf$'' is short for ``Configuration'' and reminds us that the formula is comparing the distance configurations between the points $x_1,\ldots,x_n$ and what is being plugged in for the tuple of variables $v_1,\ldots,v_n$, whence $\Conf_X(v_1,\ldots,v_n)$ is small if these two sets of points are configured in approximately the same manner.

We use the notation $X\sqsubset Y$ when $X$ is a finite metric space and $Y$ is a one-point extension of $X$, in which case the extra point is denoted by $y$.

Given $X\sqsubset Y$ with $X,Y\in \mathcal  C$, we let $\Psi^\epsilon_{X\sqsubset Y}$ denote the sentence

\[
\sup_{\bar v} \min \left\{ \epsilon \dminus \Conf_X (\vec v), \inf_w \Conf_Y(\vec v,w) \dminus \epsilon \right\}.
\]

Thus, $\Psi^\epsilon_{X\sqsubset Y}$ evaluates to $0$ in some metric space $A$ if and only if:  whenever $\bar a$ is tuple from $A$ such that $\Conf_X(\bar a)<\epsilon$ and $\delta>0$ is a positive real number, there is $b\in A$ such that $\Conf_Y(\bar a,b)<\epsilon+\delta$. That is, every subspace which approximately looks like $X$ has an extension to a subspace which approximately looks like $Y$. 

We identify $\bar{d}=(d_{ij}: 1 \leq i < j \leq n)\in [0,1]^{\binom{n}{2}}$ with the metric space on $\{1,\ldots,n\}$ with $d(i,j):=d_{ij}$.  In this manner, if $X\in \mathcal{C}$, we write $\Conf_X(\bar{d})$, with the interpretation that the appearance of $d(v_i,v_j)$ gets replaced with $d_{ij}$.  We perform a similar identification with $\Psi^\epsilon_{X\sqsubset Y}(\bar d)$.

Let $M_n\subseteq [0,1]^{\binom{n}{2}}$ denote the set of all metric spaces on $\{1,\ldots,n\}$ with values in $[0,1]$. We let $\lambda_n$ be Lebesgue measure on $[0,1]^{\binom{n}{2}}$, and we let $\nu_n$ be Lebesgue measure normalized to $M_n$, that is, 
\[
\nu_n(A)=\frac{\lambda_n(A)}{\lambda_n(M_n)}.
\]

\noindent The following is a less precise version of \cite[Theorem 1.3]{KMPS}.

\begin{fact}\label{CS}
There is a decreasing sequence $(\delta_n)$ from $(0,\frac{1}{2})$ which tends to 0 such that, setting $$D_n:=\left\{\bar d \in M_n \ : \ d_{ij} \geq \frac{1}{2}-\delta_n \text{ for all }1\leq i<j\leq n\right\},$$ we have $\lim_{n\to \infty}\nu_n(D_n)=1$.

\end{fact}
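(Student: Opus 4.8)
The plan is to derive Fact~\ref{CS} from \cite[Theorem~1.3]{KMPS} (equivalently, from the results of \cite{MT}), which is the one substantive input and which we treat as a black box: that theorem exhibits a sequence of the form $\delta_n=\Theta(n^{-c})$, for a constant $c>0$, such that with $D_n$ built from it exactly as in the statement one has $\nu_n(D_n)\to 1$, with an explicit rate. Since $n^{-c}$ is decreasing and tends to $0$, and since neither the hypothesis nor the conclusion is affected by altering finitely many of the $\delta_n$ (while enlarging a $\delta_n$ only enlarges the corresponding $D_n$), one readily replaces this by a decreasing sequence valued in $(0,\frac{1}{2})$ that still tends to $0$ and still has $\nu_n(D_n)\to1$, which is exactly what is asked. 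The rest of this sketch explains why such a statement holds and isolates the single hard step.

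For the qualitative assertion alone it suffices, by the union bound over the $\binom{n}{2}$ pairs together with a diagonal choice of $\delta_n\to0$, to prove that for each fixed $\delta>0$ one has $\nu_n\bigl(\{\bar d\in M_n: d_{12}<\frac{1}{2}-\delta\}\bigr)=o(n^{-2})$ as $n\to\infty$. The guiding picture is the one already observed above: any assignment of values in $[\frac{1}{2},1]$ to the $\binom{n}{2}$ distances automatically satisfies the triangle inequality, so $[\frac{1}{2},1]^{\binom{n}{2}}\subseteq M_n$ and hence $\lambda_n(M_n)\geq 2^{-\binom{n}{2}}$; the real content of \cite{KMPS} is a matching upper bound, $\lambda_n(M_n)\leq 2^{-\binom{n}{2}(1-o(1))}$ (indeed far sharper), which confines $\nu_n$ to a thin neighborhood of that cube. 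For the numerator, conditioning on $d_{12}=t$ the inequalities $|d_{1k}-d_{2k}|\leq t$ pin each pair $(d_{1k},d_{2k})$ to a planar set of area at most $2t$, which already gives an estimate of the shape $\lambda_n\bigl(M_n\cap\{d_{12}<\frac{1}{2}-\delta\}\bigr)\leq\lambda_{n-2}(M_{n-2})\int_0^{1/2-\delta}(2t)^{n-2}\,dt$; however this crude bound does not on its own beat $\lambda_n(M_n)$ at the exponential scale, so one must also exploit the triangle constraints tying the ``star'' distances $d_{1k},d_{2k}$ to a typical metric on $\{3,\dots,n\}$.

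The main obstacle is precisely this exponential-scale comparison: an upper bound on $\lambda_n(M_n)$ essentially attaining $2^{-\binom{n}{2}}$, or, equivalently, a sufficiently sharp bound on $\nu_n(\{d_{ij}<\frac{1}{2}-\delta\})$. This estimate is the heart of \cite{KMPS}, proved there by entropy and counting (container-type) arguments, and it is not our intention to reprove it here; our proof therefore consists of citing \cite[Theorem~1.3]{KMPS} and performing the routine bookkeeping of the first paragraph.
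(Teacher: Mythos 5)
Your proposal is correct and matches the paper's treatment: the paper states this as a Fact with no proof, simply noting it is a less precise version of \cite[Theorem 1.3]{KMPS}, which is exactly the citation you make, and your bookkeeping (passing to a decreasing sequence in $(0,\frac{1}{2})$ tending to $0$, using that enlarging $\delta_n$ only enlarges $D_n$) is the routine reduction implicit in the paper's phrasing "less precise version."
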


We let $\mu_n$ be Lebesgue measure normalized to $D_n$.

\begin{thm}\label{newkeytheorem}
For any $X_1 \sqsubset Y_1, \ldots, X_m \sqsubset Y_m$ from $\mathcal C$ and any $\epsilon > 0$, we have
\[
\lim_{n\to \infty}\mu_n\left(\left\{\bar d\in D_n \ : \ \max_{i=1,\ldots,m}\Psi_{X_i\sqsubset Y_i}^\epsilon(\bar d)=0\right\}\right)=1.
\]
\end{thm}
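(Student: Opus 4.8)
The plan is to reduce to a single extension and then run a union bound over candidate approximate copies of $X$, carrying out the per‑candidate estimate by revealing the random metric space one point at a time in a carefully chosen order. First, since each $\Psi^\epsilon_{X_i\sqsubset Y_i}$ is nonnegative, $\max_i\Psi^\epsilon_{X_i\sqsubset Y_i}(\bar d)=0$ exactly when $\Psi^\epsilon_{X_i\sqsubset Y_i}(\bar d)=0$ for every $i$, so by a union bound it suffices to prove $\mu_n(\{\bar d\in D_n:\Psi^\epsilon_{X\sqsubset Y}(\bar d)>0\})\to 0$ for a single $X\sqsubset Y$ in $\mathcal C$, where $X=\{x_1,\dots,x_k\}$ and $r_j:=d(x_j,y)\in[\tfrac12,1]$. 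We may assume $\epsilon\le\tfrac12$; when $\epsilon>\tfrac12$ the statement is trivial for large $n$, since then any point outside a tuple $\bar v$ already completes $\bar v$ to within $\epsilon$ of $Y$, forcing $\Psi^\epsilon_{X\sqsubset Y}(\bar d)=0$. Because $\bar d$ is finite all suprema and infima in $\Psi^\epsilon_{X\sqsubset Y}(\bar d)$ are attained, so $\Psi^\epsilon_{X\sqsubset Y}(\bar d)>0$ iff there is a tuple $\bar v\in\{1,\dots,n\}^k$ with $\Conf_X(\bar v)<\epsilon$ and $\Conf_Y(\bar v,w)>\epsilon$ for all $w$; call this event $A_{\bar v}$. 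When $\epsilon\le\tfrac12$, any such $\bar v$ has distinct entries (coinciding entries force $\Conf_X(\bar v)\ge\tfrac12\ge\epsilon$), so there are at most $n^k$ relevant tuples and it is enough to show $\mu_n(A_{\bar v})\le(1-c)^{n-k}$ for a constant $c=c(\epsilon,k)>0$ and all large $n$, which beats the $n^k$ from the union bound.

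Fix such a $\bar v$, put $S=\{1,\dots,n\}\setminus\{v_1,\dots,v_k\}$, and reveal the coordinates of $\bar d$ in the order: first the $\binom k2$ distances among $\bar v$ (generating a $\sigma$-algebra $\mathcal F_0$, with respect to which $\{\Conf_X(\bar v)<\epsilon\}$ is measurable); then, for each $s\in S$ in turn, the column $\bar e_s:=(d(v_j,s))_{j\le k}$; and only after all columns, the distances internal to $S$. On $\{\Conf_X(\bar v)<\epsilon\}$ one has $\Conf_Y(\bar v,s)=\max(\Conf_X(\bar v),\max_j|d(v_j,s)-r_j|)$, so $s$ already witnesses the desired extension once $\bar e_s$ lies in the box
\[
B:=\prod_{j=1}^{k}\Bigl([r_j-\epsilon,\,r_j+\epsilon]\cap[\tfrac12+\delta_n,\,1-2\delta_n]\Bigr),
\]
whence on $\{\Conf_X(\bar v)<\epsilon\}$ we get $A_{\bar v}\subseteq\bigcap_{s\in S}\{\bar e_s\notin B\}$. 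Since $r_j\in[\tfrac12,1]$ and $\epsilon>3\delta_n$ for large $n$, each interval in $B$ has length at least $\epsilon/2$, so $\lambda(B)\ge(\epsilon/2)^k=:c>0$. By the tower rule the required bound $\mu_n(A_{\bar v})\le(1-c)^{n-k}$ follows once we show that, for every $s\in S$ and every admissible value of the previously revealed coordinates,
\[
\mu_n\bigl(\bar e_s\in B\ \big|\ \mathcal F_0,\ (\bar e_{s'})_{s'<s}\bigr)\ \ge\ \lambda(B).
\]

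This last inequality is the heart of the matter, and the subtlety — the main obstacle — is that the conditional law of a column $\bar e_s$ given the past is \emph{not} uniform on its feasible region: it is weighted by the Lebesgue volume of valid completions of the not-yet-revealed coordinates, since the choice of $\bar e_s$ constrains the triangle inequalities linking $s$ to the other points of $S$. The revealing order is chosen precisely so that this weighting is harmless, via the following \emph{free column lemma}, which is exactly where ``all nontrivial distances $\ge\tfrac12$'' enters: if $Z\in D_n$ and one replaces the distances $(d(v_j,s))_{j\le k}$ from one point $s$ to a fixed $k$-set by arbitrary values $t_1,\dots,t_k\in[\tfrac12+\delta_n,1-2\delta_n]$, leaving all other distances unchanged, then the result is still in $D_n$. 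Indeed the replaced values are legal distances $\ge\tfrac12-\delta_n$, and every triangle inequality touching a changed pair $\{v_j,s\}$ (and a third point $z$, possibly some $v_{j'}$) becomes an inequality of the form ``$t_j\le d(v_j,z)+d(z,s)$'', with right-hand side $\ge 1-2\delta_n$, or ``$d(v_j,z)\le t_j+d(z,s)$'' (resp.\ ``$d(z,s)\le d(z,v_j)+t_j$''), with right-hand side $\ge(\tfrac12+\delta_n)+(\tfrac12-\delta_n)=1$, all of which hold automatically; the inequalities not touching a changed pair are those of $Z$. Consequently, for any fixed values of the past and of every coordinate revealed after column $\bar e_s$, the fiber over those of the possible values of $\bar e_s$, if nonempty, contains $[\tfrac12+\delta_n,1-2\delta_n]^k$, hence contains $B$; and this fiber is always contained in $[\tfrac12-\delta_n,1]^k$, hence has Lebesgue measure at most $1$. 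Integrating over the ``future'' coordinates via Fubini, the numerator of the conditional probability equals $\lambda(B)$ times the measure of futures admitting \emph{some} value of $\bar e_s$, while the denominator is at most that same measure; this gives the displayed bound $\mu_n(\bar e_s\in B\mid\text{past})\ge\lambda(B)$.

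Putting the pieces together yields $\mu_n(A_{\bar v})\le(1-c)^{n-k}$, hence $\mu_n(\{\Psi^\epsilon_{X\sqsubset Y}>0\})\le n^k(1-c)^{n-k}\to 0$, and the theorem follows. In this plan the genuinely delicate point is the non-uniformity of the conditional law of a column, resolved by combining the specific revealing order with the free column lemma; the reduction, the union bound, the tower-rule bookkeeping, and the volume estimate for $B$ are all routine, and the argument uses only $\delta_n\to 0$, not any rate.
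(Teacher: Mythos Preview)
Your argument is correct and shares its core geometric insight with the paper: when all nontrivial distances are at least roughly $\tfrac12$, the distances from a single point $s$ to the candidate tuple $\bar v$ can be set freely in a large sub-box without violating any triangle inequality (your ``free column lemma''). The paper exploits this same fact but more directly, bypassing conditional probabilities entirely. It bounds $\lambda_n(\text{bad set})$ from above by the naive product bound---each column lies in $[\tfrac12-\delta_n,1]^k$ and must miss a target box of volume at least $\epsilon^k$, contributing a factor $\le(\tfrac12+\delta_n)^k-\epsilon^k$---and bounds $\lambda_n(D_n)$ from below by the volume of the product set $S_n:=[\tfrac12,1]^{\binom k2}\times[\tfrac12+\delta_n,1]^{k(n-k)}\times E_n\subseteq D_n$, where $E_n$ is the projection of $D_n$ to the $S$-internal coordinates; this inclusion $S_n\subseteq D_n$ is precisely your free-column observation. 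Dividing gives the same geometric-rate bound $Cp^{n}$ beating the $n^k$ from the union bound. Your revealing/tower-rule route is a bit longer but makes explicit why the triangle constraints do not spoil the column-by-column independence; the paper's single-shot volume comparison is shorter but leaves that reasoning implicit. One harmless arithmetic slip: with only $\epsilon>3\delta_n$ the worst-case interval length in your box $B$ is $\epsilon-2\delta_n>\epsilon/3$, not $\epsilon/2$; taking $c=(\epsilon/3)^k$ (or assuming $\delta_n<\epsilon/4$) fixes this without affecting anything else.
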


\begin{proof}
Fix $i\in \{1,\ldots,m\}$ and set $X:=X_i$ and $Y:=Y_i$. Let $k = |X|$. We decompose elements $\bar d$ from $[0,1]^{\binom{n}{2}}$ as $\bar d=(\bar d',\bar d^{k+1},\ldots,\bar d^{n},\bar d'')$, where $\bar d'\in [0,1]^{\binom{k}{2}}$, $\bar d''\in [0,1]^{n-\binom{k}{2}}$, and $\bar d^t\in [0,1]^k$ for $t=k+1,\ldots,n$.  The intention is that $\bar d'$ represents $d_{ij}$ for $1\leq i<j\leq k$, $\bar d^t$ represents $d_{it}$ for $i=1,\ldots,k$ and $t=k+1,\ldots,n$, and $\bar d''$ represents $d_{ij}$ for $k < i<j\leq n$.

Let $E_n$ denote the projection of $D_n$ onto the last $\binom{n-k}{2}$ coordinates, and set $S_n:=[\frac{1}{2},1]^{\binom{k}{2}} \times [\frac{1}{2} + \delta_n, 1]^{k(n-k)} \times E_n$. Note that the values of $d$ specified by an element in $S_n$ cannot violate the triangle inequality, so we have $S_n\subseteq D_n$.

Let $B_n$ be the set of $\bar{d} \in D_n$ such that:
\begin{itemize}
    \item $\Conf_X(\bar d')\leq \epsilon$, and
    \item $\Conf_Y(\bar d',\bar d^t)> \epsilon \text{ for all }t=k+1,\ldots,n$.
\end{itemize}
If we let $A = \{\bar{d}'\in M_k :  \Conf_X(\bar{d}') \leq \epsilon\}$, then we have $\lambda_n(B_n)\leq \lambda_k(A)\cdot ((\frac{1}{2}+\delta_n)^k - \epsilon^k)^{n-k}\cdot \lambda_{n-k}(E_n)$, so
\begin{align*}
    \mu_n(B_n) &=\frac{\lambda_n(B_n)}{\lambda_n(D_n)}\leq \frac{\lambda_n(B_n)}{\lambda_n(S_n)}\\
    &\leq \frac{\lambda_k(A)\cdot ((\frac{1}{2}+\delta_n)^k-\epsilon^k)^{n-k}\cdot \lambda_{n-k}(E_n)}{(\frac{1}{2})^{\binom{k}{2}}\cdot (\frac{1}{2}-\delta_n)^{k(n-k)}\cdot \lambda_{n-k}(E_n)}\\
    &= 2^{\binom{k}{2}}\lambda_k(A) \left(\frac{(\frac{1}{2} + \delta_n)^k - \epsilon^k}{(\frac{1}{2} - \delta_n)^k}\right)^{n-k}
\end{align*}

We have that
\[
\frac{(\frac{1}{2} + \delta_n)^k - \epsilon^k}{(\frac{1}{2} - \delta_n)^k} \leq 
 \left ( \protect{\frac{\frac{1}{2} + \delta_n}{\frac{1}{2} - \delta_n}} \right )^k 
- (2\epsilon)^k
\]
and so, since $\lim_{n\to \infty}\delta_n=0$, it follows that there exists a constant $C$ and a constant $p<1$ such that $\mu_n(A_n)\leq Cp^n$ for all large enough $n$.

The previous calculation yielded an upper bound on the probability that a random element of $D_n$ failed the extension axiom $\Psi^\epsilon_{X_i\sqsubset Y_i}$ as witnessed by the first $k$ elements.  The calculation is identical if one focuses on any other $k$ element subset instead of the first $k$ coordinates.  Moreover, if we were considering $m$ extension axioms instead of just one, we would obtain a similar expression, possibly with different constants $C$ and $p$.  It follows that for some constants $K$ and $q < 1$,
\[
\mu_n\left(\left\{\bar d\in D_n \ : \ \max_{i=1,\ldots,m}\Psi_{X_i\sqsubset Y_i}^\epsilon(\bar d)>0\right\}\right)\leq mn^kKq^n
\]
for sufficiently large $n$.  As $n$ tends to infinity, this quantity goes to zero, yielding the desired result.
\end{proof}

\begin{cor}\label{newkeycor}
For any $X_1 \sqsubset Y_1, \ldots, X_m \sqsubset Y_m$ from $\mathcal C$ and any $\epsilon > 0$, we have
\[
\lim_{n\to \infty}\nu_n\left(\left\{\bar d\in M_n \ : \ \max_{i=1,\ldots,n}\Psi_{X_i\sqsubset Y_i}^\epsilon(\bar d)=0\right\}\right)=1.
\]
\end{cor}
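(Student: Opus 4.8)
The plan is to deduce the corollary directly from Theorem~\ref{newkeytheorem} and Fact~\ref{CS}, exploiting the fact that $\mu_n$ and $\nu_n$ differ only by the normalizing constant $\nu_n(D_n)$ when restricted to subsets of $D_n$. First I would record the elementary identity: since $D_n \subseteq M_n$, for any measurable $A \subseteq D_n$ we have
\[
\nu_n(A) = \frac{\lambda_n(A)}{\lambda_n(M_n)} = \frac{\lambda_n(A)}{\lambda_n(D_n)}\cdot \frac{\lambda_n(D_n)}{\lambda_n(M_n)} = \mu_n(A)\cdot \nu_n(D_n),
\]
where the middle fraction makes sense for all large $n$ because $\lambda_n(D_n) > 0$ (indeed $\lambda_n(S_n) > 0$).

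Next, write $G_n := \{\bar d\in D_n : \max_{i=1,\ldots,m}\Psi_{X_i\sqsubset Y_i}^\epsilon(\bar d)=0\}$ and $G_n' := \{\bar d\in M_n : \max_{i=1,\ldots,m}\Psi_{X_i\sqsubset Y_i}^\epsilon(\bar d)=0\}$; both are Borel, since each $\Psi_{X_i\sqsubset Y_i}^\epsilon(\bar d)$ is obtained from the coordinates of $\bar d$ by finitely many applications of $\max$, $\min$, $\inf$ over the finite $n$-point space, hence is a continuous function of $\bar d$. Clearly $G_n \subseteq G_n'$ and $G_n \subseteq D_n$, so the displayed identity gives
\[
\nu_n(G_n') \;\geq\; \nu_n(G_n) \;=\; \mu_n(G_n)\cdot \nu_n(D_n).
\]
By Theorem~\ref{newkeytheorem} we have $\mu_n(G_n)\to 1$, and by Fact~\ref{CS} we have $\nu_n(D_n)\to 1$; therefore $\nu_n(G_n')\to 1$, which is exactly the assertion of Corollary~\ref{newkeycor}.

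There is essentially no real obstacle here beyond keeping straight which measure is normalized to which set. The one conceptual point worth flagging — already implicit in the construction of $S_n$ inside the proof of Theorem~\ref{newkeytheorem} — is that passing from $D_n$ to the larger set $M_n$ cannot destroy a configuration satisfying the extension axioms, since any $\bar d \in D_n$ that satisfies $\max_i \Psi_{X_i\sqsubset Y_i}^\epsilon(\bar d)=0$ still does so when regarded as an element of $M_n$; this is what makes the inclusion $G_n \subseteq G_n'$ automatic and lets the argument go through with no further estimates.
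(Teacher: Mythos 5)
Your proof is correct and follows essentially the same route as the paper: both restrict the event to $D_n$, apply Theorem~\ref{newkeytheorem} there, and use $\nu_n(D_n)\to 1$ from Fact~\ref{CS} via the identity $\nu_n(A)=\mu_n(A)\cdot\nu_n(D_n)$ for $A\subseteq D_n$. The only cosmetic difference is that the paper writes the exact decomposition $\nu_n(A)=\mu_n(A\cap D_n)\cdot\nu_n(D_n)+\nu_n(A\setminus D_n)$ while you discard the complement term with an inequality; the content is identical.
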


\begin{proof}
Let $A:=\left\{\bar d\in M_n \ : \ \max_{i=1,\ldots,n}\Psi_{X_i\sqsubset Y_i}^\epsilon(\bar d)=0\right\}$.  We then have
$$\nu_n(A)=\mu_n(A\cap D_n)\cdot \nu_n(D_n)+\nu_n(A\setminus D_n).$$ By Theorem \ref{newkeytheorem}, $\lim_{n\to \infty}\mu_n(A\cap D_n)=1$.  Since $\lim_{n\to \infty}\nu_n(D_n)=1$, it follows that $\lim_{n\to \infty}\nu_n(A)=1$, as desired. 
\end{proof}

\section{The approximate zero-one law}

We recall the compactness theorem for continuous logic.  Given a theory $T$ (in some language), define the theory $T^+$ to consist of all sentences $\sigma\dminus \epsilon$, where $\sigma\in T$ and $\epsilon>0$.  We say that $T$ is \emph{approximately finitely satisfiable} if $T^+$ is finitely satisfiable.  
\begin{fact}[Compactness Theorem]\label{compactness}
Given a theory $T$, we have that $T$ is satisfiable if and only if it is approximately finitely satisfiable.
\end{fact}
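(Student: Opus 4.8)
The plan is to handle the two directions separately. The forward direction is immediate: if $M$ is a model of $T$, then $\sigma^M = 0$ for every $\sigma \in T$, so $(\sigma \dminus \epsilon)^M = 0$ for every $\sigma \in T$ and every $\epsilon > 0$; that is, $M \models T^+$, and in particular every finite subset of $T^+$ is satisfiable, so $T$ is approximately finitely satisfiable. For the converse I would produce a single model of $T$ as an ultraproduct of approximate models. Let $I$ be the set of pairs $(\Delta, n)$ with $\Delta \subseteq T$ finite and $n \geq 1$. For each such pair, $\{\sigma \dminus \tfrac1n : \sigma \in \Delta\}$ is a finite subset of $T^+$, hence satisfiable by hypothesis; fix a model $M_{\Delta,n}$ of it, so that $\sigma^{M_{\Delta,n}} \leq \tfrac1n$ for all $\sigma \in \Delta$.

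For $\sigma \in T$ and $n \geq 1$ set $C_{\sigma,n} = \{(\Delta,m) \in I : \sigma \in \Delta \text{ and } m \geq n\}$. These sets have the finite intersection property, since $(\{\sigma_1,\dots,\sigma_k\}, \max_i n_i)$ lies in each $C_{\sigma_i,n_i}$, so the filter they generate extends to an ultrafilter $\mathcal U$ on $I$. Form the metric ultraproduct $M := \prod_{\mathcal U} M_{\Delta,n}$; this is legitimate because all structures in play have diameter at most $1$, so their metrics are uniformly bounded. By {\L}o\'s's theorem for continuous logic, $\sigma^M = \lim_{\mathcal U} \sigma^{M_{\Delta,n}}$ for every sentence $\sigma$. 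Now fix $\sigma \in T$ and $\epsilon > 0$, and pick $n$ with $\tfrac1n \leq \epsilon$. Then $C_{\sigma,n} \in \mathcal U$, and $\sigma^{M_{\Delta,m}} \leq \tfrac1m \leq \epsilon$ for every $(\Delta,m) \in C_{\sigma,n}$, so $\sigma^M \leq \epsilon$. As $\epsilon$ was arbitrary, $\sigma^M = 0$; as $\sigma \in T$ was arbitrary, $M \models T$.

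The combinatorics above is routine once the right index set and cones are in place; the single nontrivial ingredient is {\L}o\'s's theorem for ultraproducts of metric structures, which I would cite from the background references on continuous logic (e.g.\ \cite{mtfms}). For a fully self-contained argument the main obstacle would instead be the construction of the metric ultraproduct itself: one forms the product structure with a sup-type pseudometric, quotients by the relation ``the distances tend to $0$ along $\mathcal U$,'' checks that the result is a complete metric structure, and verifies the {\L}o\'s property by induction on formulas, the connectives being handled by continuity and the $\sup$/$\inf$ quantifiers by compactness of $[0,1]$; the uniform diameter bound in the present setting makes even this step painless, since there is no need to pass to a bounded part. (Alternatively, one could deduce compactness from a completeness theorem for a formal proof system for continuous logic, but the semantic ultraproduct route is the most direct.)
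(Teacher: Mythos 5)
Your proof is correct. The paper states this compactness result as a background Fact and gives no proof of its own, and your ultraproduct construction with the $(\Delta,n)$ index set, the cones $C_{\sigma,n}$, and {\L}o\'s's theorem is precisely the standard argument from the cited reference \cite{mtfms}, so it matches the intended (implicit) proof.
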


We let $T_{\as}$ denote the $L$-theory consisting of the set of all extension axioms $\psi^{\epsilon}_{X\sqsubset Y}$ together with the sentence $\thesentence$:
\[
\sup_x\sup_y (\min\{d(x,y),\frac{1}{2}\dminus d(x,y)\})
\]
which when satisfied says that $d(x,y) \geq \frac{1}{2}$ when $x\neq y$.
A familiar amalgamation construction shows that $T_{\as}$ is satisfiable.  By Corollary \ref{newkeycor}, we can say more.

\begin{prop}
$T_{\as}$ has the \emph{finite model property}:  every finite subset of $T_{\as}$ is approximately satisfied in a finite metric space.  Equivalently, there is an ultraproduct of finite metric spaces which satisfies $T_{\as}$.
\end{prop}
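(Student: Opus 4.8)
The plan is to verify the two equivalent formulations separately and connect them via compactness. First I would observe that a finite subset $T_0 \subseteq T_{\as}$ consists of the sentence $\thesentence$ together with finitely many extension axioms $\psi^{\epsilon_1}_{X_1 \sqsubset Y_1}, \ldots, \psi^{\epsilon_m}_{X_m \sqsubset Y_m}$; by taking $\epsilon := \min_i \epsilon_i$ and using the fact that $\psi^{\epsilon}_{X \sqsubset Y}$ being $0$ implies $\psi^{\epsilon'}_{X \sqsubset Y}$ is $0$ for all $\epsilon' \geq \epsilon$ (larger tolerance is easier), it suffices to handle a single common $\epsilon$. Then Corollary \ref{newkeycor} tells us that for all sufficiently large $n$, the $\nu_n$-measure of the set of $\bar d \in M_n$ with $\max_{i}\Psi^\epsilon_{X_i \sqsubset Y_i}(\bar d) = 0$ is arbitrarily close to $1$, hence in particular positive, so there exists an actual finite metric space $X$ on $n$ points with $\Psi^\epsilon_{X_i \sqsubset Y_i}(\bar d)^X = 0$ for each $i$. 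I would then note that $\Psi^\epsilon_{X_i \sqsubset Y_i}$ is precisely the sentence that $\psi^{\epsilon}_{X_i \sqsubset Y_i}$ refers to (matching the notation in the definition of $T_{\as}$), so such an $X$ approximately satisfies $T_0$, remembering to check the cheap point that $\thesentence^X = 0$ automatically since $X$ has all nontrivial distances $\geq \tfrac{1}{2} > \tfrac{1}{2} - \delta_n$ for large $n$ — actually $X \in M_n$ need not lie in $D_n$, so one should instead observe that the extension axioms with the $X_i, Y_i \in \mathcal C$ force the witnessed distances to be $\geq \tfrac12$, or more simply just invoke Theorem \ref{newkeytheorem} with $\mu_n$ in place of $\nu_n$, producing $X \in D_n$; but even $D_n$ only guarantees distances $\geq \tfrac12 - \delta_n$, so the value of $\thesentence$ on such an $X$ is at most $\delta_n$, which tends to $0$, and that is all ``approximately satisfied'' requires.

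For the ``finite model property'' statement this completes the argument; I would phrase it as: given $T_0 \subseteq T_{\as}$ finite and $\eta > 0$, pick $n$ large enough that $\mu_n$-almost every $\bar d \in D_n$ has $\max_i \Psi^{\epsilon}_{X_i\sqsubset Y_i}(\bar d) = 0$ and also $\delta_n < \eta$; then any such $\bar d$ gives a finite metric space witnessing $\sigma^X < \eta$ for every $\sigma \in T_0$. The equivalence with the ultraproduct formulation is then a routine application of the Compactness Theorem (Fact \ref{compactness}) together with \L o\'s's theorem for continuous logic: from the finite model property, for each finite $T_0$ and each $\eta$ we get a finite metric space approximately satisfying $T_0$ to within $\eta$; fixing an enumeration and taking a suitable nonprincipal ultraproduct of such spaces (with $T_0$ and $\eta$ ranging appropriately along the index set) yields an ultraproduct of finite metric spaces in which every sentence of $T_{\as}$ takes value $0$. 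Conversely, an ultraproduct of finite metric spaces satisfying $T_{\as}$ immediately witnesses that every finite subset is approximately satisfied in a finite metric space, since any value $\sigma^{\prod X_i/\mathcal U} = 0$ is the limit along $\mathcal U$ of the finite values $\sigma^{X_i}$.

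I do not expect any serious obstacle here: the measure-theoretic content is entirely contained in Theorem \ref{newkeytheorem} and Corollary \ref{newkeycor}, and what remains is bookkeeping. The one point requiring a little care is the monotonicity of the extension axioms in $\epsilon$ (so that a single $\epsilon$ handles a finite set of axioms with various tolerances) and the observation that membership in $D_n$ rather than in the ideal set $\{d_{ij}\geq \tfrac12\}$ only costs $\delta_n \to 0$ in the value of $\thesentence$ — which is harmless for an \emph{approximate} satisfaction statement. The translation between the syntactic object $\psi^\epsilon_{X\sqsubset Y}$ and the ``evaluated'' polynomial expression $\Psi^\epsilon_{X\sqsubset Y}(\bar d)$ is exactly the identification set up in Section 2, so no new ideas are needed for that step either.
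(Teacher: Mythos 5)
Your overall approach is the paper's: combine Fact~\ref{CS} and Theorem~\ref{newkeytheorem} to find, for large $n$, a space in $D_n$ on which the finitely many extension axioms vanish and $\thesentence$ takes value at most $\delta_n$, and then note that the ultraproduct formulation is a standard repackaging via \L o\'s's theorem. The one step that does not survive scrutiny is the claimed monotonicity in $\epsilon$: it is not true that $\psi^{\epsilon}_{X\sqsubset Y}$ having value $0$ forces $\psi^{\epsilon'}_{X\sqsubset Y}$ to have value $0$ for all $\epsilon'\geq\epsilon$. The axiom $\psi^{\epsilon}_{X\sqsubset Y}$ vanishes exactly when every $\bar v$ satisfies $\Conf_X(\bar v)\geq\epsilon$ or $\inf_w\Conf_Y(\bar v,w)\leq\epsilon$; a tuple with $\Conf_X(\bar v)=\epsilon$ and $\inf_w\Conf_Y(\bar v,w)>\epsilon'$ is permitted by the $\epsilon$-axiom but violates the $\epsilon'$-axiom, since enlarging $\epsilon$ weakens the hypothesis of the implicit implication as well as its conclusion (and the reverse direction fails symmetrically), so neither direction of monotonicity holds. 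Fortunately you never actually need this reduction: apply Theorem~\ref{newkeytheorem} once for each distinct tolerance $\epsilon_i$ (with $m=1$, say); this produces finitely many subsets of $D_n$, each of $\mu_n$-measure tending to $1$, whose intersection is eventually nonempty, which is all your argument uses. With that substitution the rest of the writeup --- including your correct observation that one must produce a point of $D_n$ rather than merely of $M_n$ so that $\thesentence$ is satisfied up to $\delta_n\to 0$ --- goes through and matches the paper's (much terser) proof.
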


\begin{proof}
By Fact~\ref{CS} and Corollary \ref{newkeycor}, any finite number of extension axioms, together with any sentence $\thesentence\dminus \epsilon$  for $\epsilon >0 $, are satisfied in a sufficiently large finite metric space, whence the proposition follows.  The ultraproduct equivalence is a standard reformulation of the finite model property. (See, for example, \cite{GL}.)
\end{proof}

In order to prove our zero-one law for finite metric spaces, we show that $T_{\as}$ is a complete theory, that is, for all models $X$ and $Y$ of $T_{\as}$, we have that $X$ and $Y$ are elementarily equivalent.  We establish this fact using \emph{Ehrenfeucht-Fra\"iss\' e games}.

\begin{defn}
Given metric spaces $X$ and $Y$, $n\in \mathbb N$, and $\epsilon>0$, we define $\mathfrak G(X,Y,n,\epsilon)$ to be the two-player, $n$-round game, where at round $i$, player I chooses $a_i\in X$ or $b_i\in Y$ and then player II chooses $b_i\in Y$ or $a_i\in X$ accordingly.  We say that player II wins a run of $\mathfrak G(X,Y,n,\epsilon)$ if $$|d_X(a_i,a_j)-d_Y(b_i,b_j)|<\epsilon$$ for all $1\leq i<j\leq n$; otherwise, player I wins.  We write $X\equiv_{n,\epsilon}Y$ if player II has a winning strategy in $\mathfrak G(X,Y,n,\epsilon)$.
\end{defn}

The classical version of the following fact is well-known (see, e.g. \cite[Theorem 2.4.6]{dave}).  For a proof (of a slight variant) in the continuous setting, one can consult \cite[Lemma 2.4]{McDuff}.

\begin{fact}
If $X$ and $Y$ are metric spaces, then $X$ and $Y$ are elementarily equivalent if and only if $X\equiv_{n,\epsilon}Y$ for all $n\in \mathbb N$ and $\epsilon>0$.
\end{fact}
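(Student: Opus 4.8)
The plan is to recast the games $\mathfrak G(X,Y,n,\epsilon)$ in terms of the values of an explicit family of ``game formulas'' in a two-sorted language, and then to read off the two directions of the equivalence from simple manipulations of those values.

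Introduce a two-sorted metric language $L^2$ with sorts $P$ and $Q$, each carrying its own copy of the metric symbol $d$; any pair $X,Y$ of metric spaces gives an $L^2$-structure $X\sqcup Y$ with $P$ interpreted as $X$ and $Q$ as $Y$. For $r,i\in\mathbb N$ define an $L^2$-formula $V^{(i)}_r(\bar p,\bar q)$, where $\bar p=(p_1,\dots,p_i)$ is of sort $P$ and $\bar q=(q_1,\dots,q_i)$ is of sort $Q$, by recursion on $r$:
\[
V^{(i)}_0(\bar p,\bar q)=\max_{1\le j<l\le i}|d(p_j,p_l)-d(q_j,q_l)|,
\]
\[
V^{(i)}_{r+1}(\bar p,\bar q)=\max\{\ \sup_{p}\inf_{q}V^{(i+1)}_r(\bar p p,\bar q q)\ ,\ \sup_{q}\inf_{p}V^{(i+1)}_r(\bar p p,\bar q q)\ \},
\]
where $\sup_p,\inf_p$ range over sort $P$ and $\sup_q,\inf_q$ over sort $Q$. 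Each $V^{(i)}_r$ is a genuine $L^2$-formula, of quantifier rank at most $2r$; put $\Theta_n:=V^{(0)}_n$. Unwinding the definitions, $V^{(i)}_r(\bar a,\bar b)^{X\sqcup Y}$ is exactly the minimax value of the $r$-round game continuing from position $(\bar a,\bar b)$ with payoff (to be minimized by player II) equal to the final value of $V_0$. Hence: if $\Theta_n^{X\sqcup Y}<\epsilon$ then player II wins $\mathfrak G(X,Y,n,\epsilon)$, by maintaining the invariant $V^{(i)}_{n-i}(\bar a,\bar b)^{X\sqcup Y}<\epsilon$ after round $i$; and if player II wins $\mathfrak G(X,Y,n,\epsilon)$ then $\Theta_n^{X\sqcup Y}\le\epsilon$. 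Thus $X\equiv_{n,\epsilon}Y$ for all $n$ and all $\epsilon>0$ if and only if $\Theta_n^{X\sqcup Y}=0$ for all $n$, and it remains to prove that the latter holds exactly when $X$ and $Y$ are elementarily equivalent as $L$-structures.

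For the direction ``$X\equiv Y\Rightarrow\Theta_n^{X\sqcup Y}=0$'', observe first that $\Theta_n^{X\sqcup X}=0$ for every $X$, since player II wins the game on $X\sqcup X$ trivially by copying player I's moves into the other sort. So it suffices to show that $\Theta_n^{X\sqcup Y}$ depends only on $\operatorname{Th}_L(X)$ and $\operatorname{Th}_L(Y)$ — a Feferman–Vaught type statement for the disjoint union $X\sqcup Y$. One proves, by induction on $L^2$-formulas $\Phi(\bar p;\bar q)$, that $\Phi^{X\sqcup Y}(\bar a;\bar b)$ depends only on $\operatorname{tp}_L^X(\bar a)$ and $\operatorname{tp}_L^Y(\bar b)$: atomic formulas mention only one sort, connectives are clear, and in the quantifier step one replaces $X$ and $Y$ by $\aleph_1$-saturated elementary extensions $X'$ and $Y'$ (so that $X\sqcup Y\preceq X'\sqcup Y'$ sort by sort, and the value is unchanged), whereupon a supremum or infimum over a sort of $X'\sqcup Y'$ ranges, via the inductive hypothesis, over all complete type-extensions of $\operatorname{tp}_L^X(\bar a)$ (resp. $\operatorname{tp}_L^Y(\bar b)$). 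Feeding in the empty tuple then gives the claim, and $X\equiv Y$ forces $\Theta_n^{X\sqcup Y}=\Theta_n^{X\sqcup X}=0$.

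For the reverse direction I would run the usual induction on the complexity of an $L$-formula $\varphi(\bar x)$, producing a modulus $\omega_\varphi$ with $\omega_\varphi(0)=0$ such that
\[
|\varphi^X(\bar a)-\varphi^Y(\bar b)|\le\omega_\varphi\big(V^{(|\bar x|)}_{\operatorname{qr}(\varphi)}(\bar a,\bar b)^{X\sqcup Y}\big)
\]
for all $\bar a\in X^{|\bar x|}$ and $\bar b\in Y^{|\bar x|}$; here the atomic case is immediate, the connective case uses the uniform continuity of the connective (together with the evident monotonicity of $V^{(i)}_r$ in $r$), and the quantifier case uses that player II can answer a near-optimal move of player I in either structure. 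Applying this with $\varphi$ a sentence and $\Theta_{\operatorname{qr}(\varphi)}^{X\sqcup Y}=0$ yields $\varphi^X=\varphi^Y$, so $X\equiv Y$. The main obstacle is the Feferman–Vaught lemma of the third paragraph: although standard, it is precisely the point where one must be careful, in continuous logic, about passing from suprema over realized types to suprema over all consistent types, and it is what plays the role of the classical fact that there are only finitely many formulas of bounded quantifier rank up to logical equivalence. (Alternatively, one can prove the reverse direction's contrapositive by a direct back-and-forth argument, as in the references cited above.)
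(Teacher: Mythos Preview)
The paper does not prove this fact at all: it is stated as a \emph{Fact} and justified only by citing Marker for the classical version and \cite{McDuff} for the continuous analogue. So there is no ``paper's own proof'' to compare against; you have supplied an argument where the authors chose to defer to the literature.

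Your argument is sound. Encoding the game value as an $L^2$-sentence $\Theta_n$ and reducing the equivalence to $\Theta_n^{X\sqcup Y}=0$ is a clean organizing device, and your induction on formulas producing a modulus $\omega_\varphi$ handles the direction ``games $\Rightarrow$ elementary equivalence'' correctly (the monotonicity $V^{(i)}_r\le V^{(i)}_{r+1}$ you invoke does hold, via the auxiliary inequality $V^{(i+1)}_r(\bar p p,\bar q q)\ge V^{(i)}_r(\bar p,\bar q)$). For the Feferman--Vaught direction, your sketch via saturated extensions is essentially right, but as written it leans on ``$X\sqcup Y\preceq X'\sqcup Y'$'' before that has been justified; the cleanest way to close this is to note directly, by induction on $L^2$-formulas, that for fixed $Y$ and $\bar b\in Y$ the function $\bar a\mapsto\Phi^{X\sqcup Y}(\bar a;\bar b)$ is (a uniform limit of) $L$-formulas evaluated in $X$ --- the quantifier over sort $Q$ becomes a supremum over a fixed family of real parameters, yielding a continuous function of the finitely many atomic values $d(p_i,p_j)$ --- from which $X\preceq X'$ immediately gives $X\sqcup Y\preceq X'\sqcup Y$. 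An even shorter route, if you are willing to import it, is to observe that $\Theta_n^{X\sqcup X}=0$ and that $(X\sqcup Y)^{\mathcal U}\cong X^{\mathcal U}\sqcup Y^{\mathcal U}$, so Keisler--Shelah plus \L o\'s gives $\Theta_n^{X\sqcup Y}=0$ whenever $X\equiv Y$. Either way, what you flagged as ``the main obstacle'' is indeed the only place requiring care, and it does go through.
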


\begin{thm}
$T_{\as}$ is complete.
\end{thm}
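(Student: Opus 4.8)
The plan is to prove completeness of $T_{\as}$ via the Ehrenfeucht–Fra\"iss\'e criterion: given two models $X$ and $Y$ of $T_{\as}$, I will show that player II has a winning strategy in $\mathfrak G(X,Y,n,\epsilon)$ for every $n$ and every $\epsilon>0$. Fix $n$ and $\epsilon$. The strategy for player II is a standard back-and-forth: after round $i$ we have chosen tuples $\bar a = (a_1,\dots,a_i)$ in $X$ and $\bar b = (b_1,\dots,b_i)$ in $Y$, and player II maintains the invariant that $\bar a$ and $\bar b$ have approximately the same configuration, i.e. $|d_X(a_j,a_k) - d_Y(b_j,b_k)| < \delta_i$ for all $j<k\leq i$, where $\delta_0 < \delta_1 < \cdots < \delta_n = \epsilon$ is a fixed increasing sequence of error tolerances chosen in advance (with $\delta_n = \epsilon$), leaving room to absorb the error introduced at each later round. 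The key point is that $\delta_0$ can be taken much smaller than $\frac14$, so that in both $X$ and $Y$ the only way two points can be within $\delta_0$ of each other is to be equal — this is where the sentence $\thesentence$ (forcing all nontrivial distances to be $\geq \frac12$) does its work, and it means that "approximately the same configuration" is essentially an honest partial isometry between finite subspaces, up to small perturbation.

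The heart of the argument is the single-step extension. Suppose at round $i+1$ player I plays, say, $a_{i+1}\in X$; player II must find $b_{i+1}\in Y$. Consider the finite metric space $Z = \{a_1,\dots,a_{i+1}\}$ as a subspace of $X$. By perturbing distances slightly (moving each $d_X(a_j,a_k)$ to a nearby value, and rounding any distance already $\geq \frac12$ up or leaving it, while collapsing any pair at distance $<\delta_0$, i.e. equal, to genuinely equal points), I obtain from $\{b_1,\dots,b_i\}\subseteq Y$ together with a formally-added point a configuration realizing some $X_0 \sqsubset Y_0$ with $X_0, Y_0 \in \mathcal C$: concretely $X_0$ is (a rounded copy of) $\{a_1,\dots,a_i\}$ matching $\{b_1,\dots,b_i\}$ up to error $<\delta_i$, and $Y_0$ is $X_0$ with the extra point $y$ placed so that $d(y, \cdot)$ matches $d_X(a_{i+1},\cdot)$ up to small error. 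Here we use that any assignment of distances $\geq \frac12$ automatically satisfies the triangle inequality, so $Y_0$ really is a legitimate member of $\mathcal C$. Now apply the extension axiom $\psi^{\eta}_{X_0 \sqsubset Y_0}$ holding in $Y$, for a suitably small $\eta$: since $\{b_1,\dots,b_i\}$ realizes $X_0$ in $Y$ to within tolerance $<\eta$, the axiom provides $b_{i+1}\in Y$ with $\Conf_{Y_0}(\bar b, b_{i+1}) \leq \eta$, hence $|d_Y(b_{i+1},b_j) - d_X(a_{i+1},a_j)| < \delta_{i+1}$ for all $j\leq i$, as long as the cumulative error ($\delta_i$ plus the rounding errors plus $\eta$) stays below $\delta_{i+1}$. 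The symmetric case (player I plays in $Y$) is handled identically, using that $X\models T_{\as}$. After $n$ rounds the invariant gives $|d_X(a_j,a_k)-d_Y(b_j,b_k)| < \delta_n = \epsilon$, so player II wins.

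I expect the main obstacle to be purely bookkeeping: choosing the error budget $\delta_0 < \delta_1 < \cdots < \delta_n = \epsilon$ and, at each step, the auxiliary $\eta$ and the rounding of the target configuration $Y_0$ so that (a) $Y_0$ genuinely lies in $\mathcal C$ (all distances in $[\frac12,1]$, which forces some care when a target distance is close to $\frac12$ from below, or close to $1$), (b) the extension axiom in the correct model applies with its tolerance $\eta$ comfortably exceeding the error with which the current tuple realizes $X_0$, and (c) the new error after round $i+1$ is still below $\delta_{i+1}$. None of this is deep — it is the usual "spend half your remaining budget each round" argument, plus the observation that distances at least $\frac12$ make the discreteness and the triangle inequality both free — but it must be laid out carefully. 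Once $T_{\as}$ is shown complete, the approximate $0$-$1$ law follows immediately from Corollary~\ref{newkeycor} together with Fact~\ref{CS}: any sentence $\sigma$ is determined up to $\epsilon$ by finitely many extension axioms and $\thesentence$, which hold approximately almost surely.
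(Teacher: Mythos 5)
Your proposal is correct and follows essentially the same route as the paper: an Ehrenfeucht--Fra\"iss\'e back-and-forth in which player II's response at each round is supplied by an extension axiom $\psi^{\eta}_{X_0\sqsubset Y_0}$, the relevant configurations lying in $\mathcal C$ because models of $T_{\as}$ satisfy $\thesentence$ exactly. The only cosmetic differences are that the paper organizes this as an induction on $n$ (play the $(n-1)$-round game at tolerance $\epsilon/2$, then spend the remaining $\epsilon/2$ on one final extension), and that your rounding/perturbation step is unnecessary, since finite subspaces of models of $T_{\as}$ are already literal members of $\mathcal C$.
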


\begin{proof}
Fix $X,Y\models T_{\as}$.  It suffices to show that $X\equiv_{n,\epsilon}Y$ for all $n\in \mathbb N$ and $\epsilon>0$.  We prove this by induction on $n$, the base case $n=1$ being trivial.  Now suppose that $n>1$ and $X\equiv_{n-1,\epsilon} Y$ for all $\epsilon>0$.  Fix $\epsilon>0$ and let player II play the first $n-1$ rounds of $\mathfrak G(X,Y,n,\epsilon)$ according to a winning strategy for $\mathfrak G(X,Y,n-1,\frac{\epsilon}{2})$, yielding $a_1,\ldots,a_{n-1}\in X$ and $b_1,\ldots,b_{n-1}\in Y$ with $$|d_X(a_i,a_j)-d_Y(b_i,b_j)|<\frac{\epsilon}{2}$$ for all $1\leq i<j\leq n-1$.  Now suppose that player I plays $a_n\in X$ in the final round of $\mathfrak G(X,Y,n,\epsilon)$ (the case that they play $b_n\in Y$ is handled in a symmetric fashion).  Let $X_0:=\{a_1,\ldots,a_{n-1}\}$ and let $Y_0:=X_0\cup \{a_n\}$. Then $X_0$ and $Y_0$ are in $\mathcal{C}$, since $Y\models \thesentence$. Then $Y\models \psi^{\frac{\epsilon}{2}}_{X_0\sqsubset Y_0}$ and $ \Conf_{X_0}(b_1,\ldots,b_{n-1})<\frac{\epsilon}{2}$, so we have that $$Y\models \inf_w \left(\Conf(b_1,\ldots,b_{n-1},w)\dminus \frac{\epsilon}{2}\right),$$ whence there is $b_n\in Y$ such that $\Conf_{Y_0}(b_1,\ldots,b_n)<\epsilon$.  It follows that this strategy is winning for player II in $\mathfrak G(X,Y,n,\epsilon)$.
\end{proof}

Given a sentence $\sigma$ in the language of metric spaces, let $\sigma^{\as}$ denote the unique real number $r$ such that $\sigma^X=r$ for all $X\models T_{\as}$.  It follows that, given any $\epsilon>0$, the theory $T_{\as}\cup \{\epsilon\dminus |\sigma-\sigma^{\as}|\}$ is not satisfiable.  By Fact \ref{compactness}, there are extension axioms $\psi^{\epsilon_i}_{X_i\sqsubset Y_i}$ ($1\leq i\leq m$) and $\eta>0$ such that the theory
$$\left\{\thesentence\dminus \eta,\left(\max_{1\leq i\leq m}\psi^{\epsilon_i}_{X_i\sqsubset Y_i}\right)\dminus \eta,\epsilon\dminus |\sigma-\sigma^{\as}|\right\}$$ is not satisfiable.  Combining this observation with Fact~\ref{CS} and Corollary \ref{newkeycor} immediately yields:

\begin{thm}[Approximate zero-one law]\label{thm:main}
For any sentence $\sigma$ in the language of metric spaces and any $\epsilon>0$, we have
$$\lim_{n\to \infty}\nu_n(\{X\in M_n \ : \ |\sigma^{X}-\sigma^{\as}|<\epsilon\})=1.$$
\end{thm}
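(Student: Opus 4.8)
The plan is to upgrade the Compactness consequence recorded just above into a statement about $\nu_n$-measure. Fix the sentence $\sigma$ and $\epsilon>0$, and let $\psi^{\epsilon_1}_{X_1\sqsubset Y_1},\ldots,\psi^{\epsilon_m}_{X_m\sqsubset Y_m}$ and $\eta>0$ be as in the preceding paragraph, so that the theory $\{\thesentence\dminus\eta,\ (\max_{i}\psi^{\epsilon_i}_{X_i\sqsubset Y_i})\dminus\eta,\ \epsilon\dminus|\sigma-\sigma^{\as}|\}$ has no model. First I would read off the contrapositive at the level of a single structure: if a metric space $X$ satisfies $\thesentence^X\le\eta$ and $(\psi^{\epsilon_i}_{X_i\sqsubset Y_i})^X\le\eta$ for all $i$, then necessarily $|\sigma^X-\sigma^{\as}|<\epsilon$ --- for otherwise truncated subtraction forces all three of the displayed sentences to take value $0$ in $X$, contradicting unsatisfiability (here I use that $\sigma^{\as}$ is the well-defined value of $\sigma$ on models of the complete theory $T_{\as}$). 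Applying this with $X$ ranging over $M_n$ gives the containment
\[
\{X\in M_n : |\sigma^X-\sigma^{\as}|\ge\epsilon\}\ \subseteq\ \{X\in M_n : \thesentence^X>\eta\}\ \cup\ \bigcup_{i=1}^{m}\{X\in M_n : (\psi^{\epsilon_i}_{X_i\sqsubset Y_i})^X>\eta\},
\]
so it suffices to show each set on the right-hand side has $\nu_n$-measure tending to $0$.

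For the first set, I would observe that on $D_n$ every nontrivial distance lies in $[\frac12-\delta_n,1]$, and such a distance $d(x,y)$ contributes either $0$ (when $d(x,y)\ge\frac12$) or $\frac12-d(x,y)\le\delta_n$ (when $d(x,y)<\frac12$, using $\delta_n<\frac14$) to the supremum defining $\thesentence$; hence $\thesentence^X\le\delta_n$ for $X\in D_n$. Since $\delta_n\to 0$, for all large $n$ we get $\{X\in M_n:\thesentence^X>\eta\}\subseteq M_n\setminus D_n$, whose $\nu_n$-measure tends to $0$ by Fact~\ref{CS}. For each of the remaining sets, I would apply Corollary~\ref{newkeycor} to the single one-point extension $X_i\sqsubset Y_i$ with its own parameter $\epsilon_i$ (the corollary is stated with a common $\epsilon$, but a list of length one is all that is needed, and a finite union of events whose $\nu_n$-measure tends to $0$ still has $\nu_n$-measure tending to $0$), obtaining $\nu_n(\{X\in M_n:(\psi^{\epsilon_i}_{X_i\sqsubset Y_i})^X>0\})\to 0$ and hence a fortiori $\nu_n(\{X\in M_n:(\psi^{\epsilon_i}_{X_i\sqsubset Y_i})^X>\eta\})\to 0$. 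Summing the finitely many bounds, $\nu_n(\{X\in M_n:|\sigma^X-\sigma^{\as}|\ge\epsilon\})\to 0$, which is the theorem.

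Since the genuine content --- extracting a finite, purely metric certificate from the failure of $T_{\as}\cup\{\epsilon\dminus|\sigma-\sigma^{\as}|\}$ --- has already been done via the Compactness Theorem, I do not expect a real obstacle; the care needed is bookkeeping. The two points to get right are: the direction of every inequality under $\dminus$ (so that ``value $\le\eta$'' genuinely makes $\tau\dminus\eta$ evaluate to $0$, while $\epsilon\dminus|\sigma-\sigma^{\as}|=0$ genuinely says $|\sigma^X-\sigma^{\as}|\ge\epsilon$), and the small bridge observation that $\thesentence$ is automatically $\le\delta_n$ on $D_n$, which is what connects Fact~\ref{CS} to the condition $\thesentence\dminus\eta$. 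Handling distinct $\epsilon_i$ by invoking Corollary~\ref{newkeycor} once per index, rather than all at once, is the only other wrinkle.
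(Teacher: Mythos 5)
Your proposal is correct and is exactly the paper's argument: the paper's ``proof'' consists of the compactness paragraph preceding the theorem statement together with the remark that combining it with Fact~\ref{CS} and Corollary~\ref{newkeycor} ``immediately yields'' the result, and you have simply supplied the routine combination (the contrapositive reading of unsatisfiability, the observation that $\thesentence^X\le\delta_n$ on $D_n$, and one application of the corollary per extension axiom). No gaps; the only cosmetic point is that the bound $\min\{d(x,y),\frac12\dminus d(x,y)\}\le\frac12\dminus d(x,y)\le\delta_n$ holds without needing $\delta_n<\frac14$.
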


\section{Further model-theoretic properties of \texorpdfstring{$T_{\as}$}{T\_AS}}

In this last section, we gather some further model-theoretic facts about $T_{\as}$.  We assume that the reader is familiar with the model-theoretic terms defined in this section.

\begin{thm}\label{QE}
$T_{\as}$ has quantifier-elimination and is the model-completion of the theory 
$T_0:=\{\thesentence\}$.
\end{thm}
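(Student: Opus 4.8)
The plan is to establish quantifier elimination first — this is the only step with real content — and then obtain the model-completion statement almost formally.

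\emph{Quantifier elimination.} Since $L$ consists only of $d$, a substructure of a model of $T_{\as}$ is just a metric subspace, every finite subset is automatically a substructure (all distances between distinct points being $\geq\frac{1}{2}$), and every quantifier-free formula is, up to uniform approximation, a continuous function of the pairwise distances among its free variables; in particular the quantifier-free type of a point $a$ over a finite set $A=\{a_1,\dots,a_k\}$ amounts to the tuple $\big(d(a_i,a)\big)_{i\leq k}$. By the usual back-and-forth test for quantifier elimination in continuous logic (cf.\ \cite{mtfms}), it therefore suffices to verify the following one-point extension property: for all $M,N\models T_{\as}$ with $N$ $\omega$-saturated, every finite $A=\{a_1,\dots,a_k\}\subseteq M$, every isometric embedding $f\colon A\to N$, and every $a\in M$, there is $b\in N$ with $d_N(f(a_i),b)=d_M(a_i,a)$ for all $i$. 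If $a\in A$ take $b=f(a)$; otherwise put $r_i:=d_M(a_i,a)\in[\frac{1}{2},1]$, let $X:=A$, and let $Y:=A\cup\{a\}$ carry the metric inherited from $M$, so that $X\sqsubset Y$ and $X,Y\in\mathcal C$ because $M\models\thesentence$. Evaluating the extension axiom $\psi^{\epsilon}_{X\sqsubset Y}$ in $N$ at the tuple $\big(f(a_1),\dots,f(a_k)\big)$ and using that $\Conf_X\big(f(a_1),\dots,f(a_k)\big)=0$ since $f$ is isometric, we find, for every $\epsilon>0$, a point $w\in N$ with $|d_N(f(a_i),w)-r_i|<\epsilon$ for all $i$. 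Hence the set of conditions $\{\,d(f(a_i),x)=r_i : i\leq k\,\}$ is approximately finitely satisfiable in $N$, so it is consistent by Fact~\ref{compactness}; being a type over the finite set $f(A)$, it is realized in the $\omega$-saturated model $N$ by the required point $b$. Thus $T_{\as}$ has quantifier elimination, and in particular is model-complete.

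\emph{The model completion.} The theory $T_0=\{\thesentence\}$ is axiomatized by a single $\sup$-sentence, so it is universal: its models are exactly the metric spaces of diameter $\leq 1$ in which distinct points lie at distance $\geq\frac{1}{2}$, and this class is closed under substructures and under unions of chains. Every model of $T_{\as}$ is a model of $T_0$, and conversely every $M\models T_0$ embeds into a model of $T_{\as}$: build an $\omega$-chain $M=M_0\subseteq M_1\subseteq\cdots$ where $M_{n+1}$ is obtained from $M_n$ by adjoining, for each $X\sqsubset Y$ in $\mathcal C$ (with new point $y$) and each tuple $\bar v$ in $M_n$ of length $|X|$, a fresh point $w$ with $d(v_i,w):=d(x_i,y)$ and every remaining new distance set to $1$; this is a legitimate metric on $M_{n+1}$ because all assigned values lie in $[\frac{1}{2},1]$, so no instance of the triangle inequality can fail. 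The union $M^{*}$ is then a model of $T_0$ in which $\inf_w\Conf_Y(\bar v,w)\leq\Conf_X(\bar v)$ for every $X\sqsubset Y$ in $\mathcal C$ and every tuple $\bar v$, whence $M^{*}\models T_{\as}$. Finally, for any $M\models T_0$, any two models of $T_{\as}$ containing $M$ as a common substructure are, by quantifier elimination, elementarily equivalent over $M$; combined with the two embedding statements just established, this is precisely the assertion that $T_{\as}$ is the model completion of $T_0$ — and it reproves, as a byproduct, that $T_0$ has the amalgamation property.

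\emph{Main obstacle.} Granting the earlier sections, the only step that requires genuine care is the verification of the one-point extension property above; and even there the content reduces to the observation that in this language ``realizing the correct quantifier-free type over $A$'' means nothing more than ``sitting at the prescribed distances from the points of $A$'', which is exactly what the extension axioms, together with $\omega$-saturation, provide.
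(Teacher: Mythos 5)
Your proof is correct, and the quantifier-elimination half is essentially the paper's argument: both reduce matters to a one-point extension property and verify it by feeding the isometric image of $A$ into an extension axiom $\psi^{\epsilon}_{X\sqsubset Y}$, then absorbing the resulting approximate realizations via compactness (the paper realizes the extension in an elementary extension $Y\preceq Y'$ where you invoke $\omega$-saturation of $N$; these are interchangeable formulations of the same test). Where you genuinely diverge is in showing that every model of $T_0$ embeds into a model of $T_{\as}$. The paper reduces, by compactness, to approximately realizing each finite configuration from $\mathcal C$ inside some already existing model of $T_{\as}$, which it does by induction on the size of the configuration using the extension axioms. You instead build a model of $T_{\as}$ directly on top of the given $M\models T_0$ by an $\omega$-chain adjoining witnesses for all one-point extensions, exploiting the fact that any symmetric assignment of values in $[\frac{1}{2},1]$ is automatically a metric; this realizes the extension axioms exactly (value $0$) rather than approximately, avoids a second appeal to compactness, and makes the underlying amalgamation visible. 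The one wrinkle in your construction is the prescription $d(v_i,w):=d(x_i,y)$, which is ill-defined for tuples $\bar v$ with repeated entries; but setting instead $d(v_i,w):=\max\{d(x_j,y) : v_j=v_i\}$ keeps all new values in $[\frac{1}{2},1]$ and still gives $|d(x_i,y)-d(v_i,w)|\leq \Conf_X(\bar v)$ by the triangle inequality in $Y$, so the key estimate $\inf_w \Conf_Y(\bar v,w)\leq \Conf_X(\bar v)$ survives. Both routes are sound; yours is more constructive, while the paper's is shorter given that compactness is already in play.
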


\begin{proof}
By a standard model-theoretic test for quantifier-elimination (see \cite[Proposition 13.6]{mtfms}), it is enough to prove the following:  given $X,Y\models T_{\as}$, $A\subseteq X$, an isometric embedding $f:A\hookrightarrow Y$, and $a\in X\setminus A$, there is an elementary extension $Y\preceq Y'$ and an isometric embedding $g:A\cup\{a\}\hookrightarrow Y'$ extending $f$.  However, this follows easily from Fact \ref{compactness} and the fact that $Y\models T_{\as}$.

In order to prove that $T_{\as}$ is the model-completion of $T_0$, it remains to show that every model of $T_0$ embeds in a model of $T_{\as}$. By Fact~\ref{compactness}, it suffices to show that, for any $X_0=\{a_1,\ldots,a_n\}$ from $\mathcal{C}$ and $\epsilon>0$, there is $X\models T_{\as}$ and $b_1,\ldots,b_n\in X$ such that $\Conf_{X_0}(b_1,\ldots,b_n)<\epsilon.$  We do this by induction on $n$, the case $n=1$ being trivial. Suppose that $n>1$ and the claim is true for $n-1$.  Fix $X_0$ as above and $\epsilon>0$.  By induction, there is $X\models T_{\as}$ and $b_1,\ldots,b_{n-1}\in X$ such that, setting $X_0':=\{a_1,\ldots,a_{n-1}\}$, we have $\Conf_{X_0'}(b_1,\ldots,b_{n-1})<\frac{\epsilon}{2}$.  Since $X\models \psi^{\frac{\epsilon}{2}}_{X_0'\sqsubset X_0}$, it follows that there is $b_n\in X$ such that $\Conf_{X_0}(b_1,\ldots,b_n)$, as desired. 
\end{proof}

\begin{cor}
$T_{\as}$ has continuum many non-isomorphic separable models.
\end{cor}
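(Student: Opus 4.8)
The plan is to produce continuum-many pairwise non-isomorphic separable models of $T_{\as}$ by controlling a crude isomorphism invariant. Since every model of $T_{\as}$ satisfies $\thesentence$, distinct points lie at distance at least $\tfrac12$; in particular every model is uniformly discrete, so a model is separable precisely when it is countable. For a metric space $M$ write $D(M):=\{d(x,y): x,y\in M,\ x\ne y\}\subseteq[\tfrac12,1]$ for its \emph{distance spectrum}; as an isomorphism of $L$-structures is a bijective isometry, $D(M)$ is an isomorphism invariant. Thus it suffices to build, for continuum-many countable dense subsets $S\subseteq[\tfrac12,1]$, a countable model $M_S\models T_{\as}$ with $D(M_S)=S$.

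The guiding observation is that one cannot simply take a countable model of $T_0$ with spectrum $S$ and embed it into a model of $T_{\as}$ using Theorem~\ref{QE}: passing to the model completion introduces new points realizing new distances, destroying control of the spectrum. Instead I would construct $M_S$ as a homogeneous limit in which only distances from $S$ ever occur. Fix a countable dense $S\subseteq[\tfrac12,1]$ and let $\mathcal K_S$ be the class of finite metric spaces all of whose nonzero distances lie in $S$, viewed as finite relational structures with one binary relation per element of $S$. Since every element of $S$ is at least $\tfrac12$, the triangle inequality is vacuous, so $\mathcal K_S$ is closed under substructures and has the joint embedding and amalgamation properties (amalgamate over a common part by assigning all new cross-distances a fixed value $s_0\in S$). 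Hence $\mathcal K_S$ is a Fra\"iss\'e class; let $M_S$ be its Fra\"iss\'e limit, a countable ultrahomogeneous metric space whose age is exactly $\mathcal K_S$, so that $D(M_S)=S$ and $M_S$ is infinite.

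The step I expect to require the most care is verifying that $M_S\models T_{\as}$, i.e.\ that the extension axioms hold. That $M_S\models\thesentence$ is immediate. For $\Psi^\epsilon_{X\sqsubset Y}$ with $X\sqsubset Y$ in $\mathcal C$ one must show that any $\bar b$ from $M_S$ with $\Conf_X(\bar b)<\epsilon$ admits $c\in M_S$ with $\Conf_Y(\bar b,c)\le\epsilon$. When $\epsilon\ge\tfrac12$ this is trivial: since $M_S$ is infinite there is $c$ distinct from every coordinate of $\bar b$, and then every $|d(b_i,c)-d(x_i,y)|\le\tfrac12\le\epsilon$. When $\epsilon<\tfrac12$ the coordinates of $\bar b$ are pairwise distinct (two equal coordinates would force a distance in $X$ below $\tfrac12$), so $\bar b$ enumerates a substructure $Z\in\mathcal K_S$; pick $s_i\in S$ with $|s_i-d(x_i,y)|<\epsilon$ using density of $S$, let $Z'\in\mathcal K_S$ be the one-point extension of $Z$ with the new point at distance $s_i$ from $b_i$ (automatically a metric space), and invoke the extension property of the Fra\"iss\'e limit to realize $Z'$ over $\bar b$ inside $M_S$; the image of the new point is the required $c$.

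Finally, to see the construction yields continuum-many distinct spectra, fix distinct irrationals $p_i\in(\tfrac12,1)$ for $i\in\mathbb N$ and set $S_A:=(\mathbb Q\cap[\tfrac12,1])\cup\{p_i: i\in A\}$ for $A\subseteq\mathbb N$; each $S_A$ is countable and dense, and $A\ne A'$ gives $S_A\ne S_{A'}$, hence $D(M_{S_A})\ne D(M_{S_{A'}})$ and $M_{S_A}\not\cong M_{S_{A'}}$. This produces $2^{\aleph_0}$ pairwise non-isomorphic separable models, which is the most possible since a countable metric structure is coded by a real-valued function on a countable set.
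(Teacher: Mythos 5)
Your proof is correct, but it takes a more constructive route than the paper. Both arguments use the same isomorphism invariant --- the set of realized distances, which is countable in any separable (hence countable, by uniform discreteness) model --- but they produce continuum many distinct values of that invariant differently. The paper argues softly: by Theorem \ref{QE} every model of $T_0$ embeds in a model of $T_{\as}$, so every $r\in[\frac{1}{2},1]$ is realized in some separable model; since each isomorphism class accounts for only countably many distances, fewer than continuum many classes could not cover all of $[\frac{1}{2},1]$. You instead build, for each countable dense $S\subseteq[\frac{1}{2},1]$, an explicit Fra\"iss\'e limit $M_S$ with distance spectrum exactly $S$, and verify the extension axioms directly (your case split on $\epsilon<\frac{1}{2}$ versus $\epsilon\geq\frac{1}{2}$, and the use of density of $S$ to approximate the prescribed distances $d(x_i,y)$ within $\epsilon$, are exactly what is needed; the triangle inequality is indeed vacuous throughout, and the class $\mathcal K_S$ has countably many isomorphism types since $S$ is countable). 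Your approach costs an extra verification but buys more information: it exhibits concrete witnesses and shows that every countable dense subset of $[\frac{1}{2},1]$ occurs as the spectrum of a separable model, whereas the paper's counting argument is nonconstructive. You are also right to flag that naively embedding a model of $T_0$ with prescribed spectrum into a model of $T_{\as}$ loses control of the spectrum --- this is precisely why the paper retreats to a cardinality argument rather than controlling spectra exactly.
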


\begin{proof}
Any model of $T_{\as}$ is topologically discrete, so any separable model of $T_{\as}$ is countable. In any such model, the metric $d$ only takes on countably many values in $[0,1]$. By Theorem \ref{QE}, given any separable model $X_0$ of $T_0$, there is a separable model $X$ of $T_{\as}$ such that $X_0$ embeds into $X$, so for every value $r\in [\frac{1}{2},1]$, there is a separable model $X$ of $T_{\as}$ such that the metric takes on the value $r$ in $X$. It follows that $T_{\as}$ has continuum many non-isomorphic separable models.  
\end{proof}

\begin{thm}
$T_{\as}$ is not stable but is supersimple of $U$-rank $1$.  Moreover, forking independence is characterized by
$$A\ind_C B\Leftrightarrow A\cap B\subseteq C,$$ where $A,B,C$ are small subsets of some monster model $\mathbb X$ of $T_{\as}$. 
\end{thm}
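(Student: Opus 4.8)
The plan is to establish the three assertions — instability, supersimplicity of $U$-rank $1$, and the stated characterization of forking independence — by leveraging the quantifier elimination from Theorem \ref{QE} and the discrete/combinatorial nature of models of $T_{\as}$. First I would observe that, by quantifier elimination, every formula is equivalent to a continuous combination of the atomic formulas $d(x_i,x_j)$, so the type of a tuple over a set is determined by the distances among the tuple and to the parameter set; this reduces all model-theoretic computations to understanding these distance patterns. For \emph{instability}, I would exhibit the order property directly: using the extension axioms, build in a monster model sequences $(a_i)_{i<\omega}$ and $(b_j)_{j<\omega}$ such that $d(a_i,b_j)$ takes one value (say $\frac{1}{2}$) when $i<j$ and a different value (say $1$) when $i\geq j$ — realizability of each finite configuration follows from Fact \ref{compactness} together with the extension axioms, since any assignment of distances in $[\frac{1}{2},1]$ is automatically a metric. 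This shows the formula $\varphi(x,y)=d(x,y)$ has the order property, hence $T_{\as}$ is unstable.

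Next, for supersimplicity and the independence characterization, I would \emph{guess} the independence relation to be $A\ind_C B \iff A\cap B\subseteq C$ (after passing to algebraic closures, though here I expect $\operatorname{acl}(C)=C$ up to the relevant equivalence since the structure is discrete and rigid enough) and then verify that this relation satisfies the Kim--Pillay axioms characterizing forking independence in a simple theory: invariance, symmetry, transitivity, finite character, extension, local character, and the independence theorem over models (or over algebraically closed sets). The key input for \emph{extension} and the \emph{independence theorem} is again the extension axioms: given consistent distance requirements on a new point relative to several independent sets, one patches them together — and because the "free" choice of distance to a point outside all the named sets can be taken to be any value in $[\frac{1}{2},1]$, in particular a value realizing whatever the extension axiom demands, the amalgamation goes through with no triangle-inequality obstruction. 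Once $\ind$ is shown to satisfy these axioms, simplicity follows and $\ind$ \emph{is} forking independence; the $U$-rank computation then reduces to noting that over any set $C$, a single point $a\notin C$ forks with $C$ in the minimal possible way — a nonforking extension only needs to avoid "collapsing" $a$ into $C$ — so every complete type over a set has a unique nonforking extension to any superset once we fix whether the new element is old or new, giving $U$-rank exactly $1$ (and $U$-rank $\geq 1$ since the theory is not trivial/not $U$-rank $0$, as there are nonalgebraic types).

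In more detail on the axiom verification: invariance, symmetry, finite character, and transitivity are immediate from the definition $A\cap B\subseteq C$. \emph{Local character}: for any $A$ and any $B$, the set $C_0 := A\cap B$ is contained in $A$ and has $|C_0|\leq|A|$, and $A\ind_{C_0}B$, giving local character with the trivial bound. \emph{Extension}: given $A\ind_C B$ and $B\subseteq B'$, one must find $A'\equiv_{CB}A$ with $A'\ind_C B'$; realize over $CB'$ the "generic" type that keeps $A'$ disjoint from $B'$ outside $C$ while preserving all distances to $CB$ — this type is consistent by Fact \ref{compactness} and the extension axioms, since the only constraints are distance values in $[\frac{1}{2},1]$ and these never conflict. \emph{Independence theorem over models} (or over $C$ with $\operatorname{acl}(C)=C$): given $M \preceq \mathbb X$, tuples $A_1 \equiv_M A_2$, and $B \ind_M C$ with $A_1 \ind_M B$, $A_2 \ind_M C$, one produces $A$ with $A\equiv_{MB}A_1$, $A\equiv_{MC}A_2$, and $A\ind_M BC$ by specifying: distances within $A$ and from $A$ to $M$ as in the common type, distances from $A$ to $B$ as in $\operatorname{tp}(A_1/MB)$, distances from $A$ to $C$ as in $\operatorname{tp}(A_2/MC)$, and all remaining distances (from $A$ to $BC \setminus M$ where not yet specified, e.g. if $B$ and $C$ overlap outside $M$ — but independence $B\ind_M C$ forces $B\cap C\subseteq M$, so there is no overlap) chosen generically in $[\frac{1}{2},1]$; consistency is again automatic. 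The main obstacle I anticipate is the bookkeeping around algebraic closure and imaginaries — one must be slightly careful that $\operatorname{acl}$ of a set is the set itself (which holds because the structure is discrete with all nonzero distances $\geq\frac12$, so no new points are algebraic over a finite set) so that the stated characterization with "$A\cap B\subseteq C$" literally is forking independence rather than merely agreeing with it after closing off; modulo that, every step reduces to the observation, used repeatedly, that distance assignments valued in $[\frac12,1]$ impose no triangle-inequality constraints and hence any prescribed finite distance pattern extending a model of $T_{\as}$ is realized.
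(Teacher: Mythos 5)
Your proposal is correct and, for the main content of the theorem (simplicity, the forking characterization, and $U$-rank $1$), follows essentially the same route as the paper: guess that $A\ind_C B\Leftrightarrow A\cap B\subseteq C$, verify the Kim--Pillay axioms (the paper dismisses all but the Independence Theorem over Models as straightforward and then gives exactly the amalgamation argument you describe --- the key point in both being that all distances from the new tuple to $(A\cup B)\setminus M$ are forced to be at least $\tfrac12$, so the amalgam is automatically a model of $T_0$ and embeds into the monster by model-completeness and quantifier elimination), and then get $U$-rank $1$ by observing that a forking extension of a $1$-type must put the realization into the new parameter set and is therefore algebraic. The one place you genuinely diverge is instability: you exhibit the order property directly via a half-graph configuration $d(a_i,b_j)=\tfrac12$ for $i<j$ and $=1$ for $i\geq j$ (realizable since any $[\tfrac12,1]$-valued assignment is a metric), whereas the paper instead counts nonforking extensions --- a $1$-type over a model has continuum many nonforking extensions to a single new parameter, contradicting stability via the bound in \cite[Theorem 14.12]{mtfms}. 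Both arguments are valid; yours is more hands-on, the paper's is a one-line corollary of the forking calculus it has just established. One small inaccuracy to strike: your aside that ``every complete type over a set has a unique nonforking extension to any superset once we fix whether the new element is old or new'' is false (that is precisely the paper's witness of instability), but nothing in your $U$-rank computation depends on it --- what matters is only that forking extensions are algebraic, which you do establish.
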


\begin{proof}
It is straightforward to verify that the independence relation in the above display satisfies all of the axioms of forking independence in simple theories, whence we can conclude that $T_{\as}$ is simple and the above independence relation is forking independence.\footnote{See \cite[Theorem 5.4.5]{jeanmartin}.}  We verify only the Independence Theorem over Models.  Suppose that $X$ is a model, $X\subseteq A$, $X\subseteq B$, $A\ind_X B$, and $p(x)\in S(A)$ and $q(x)\in S(B)$ are independent extensions of their common restriction $p_0\in S(X)$.  Since $p$ and $q$ are independent extensions of $p_0$, all distances specified by $p$ and $q$ between $x$ and elements of $(A\cup B)\setminus X$ are at least $\frac{1}{2}$, whence we may find an abstract extension $Y:= A\cup B\cup \{a\}$ of $A\cup B$ such that $Y\models T_0$ and  $a$ is a tuple realizing the quantifier-free parts of both $p$ and $q$.  Since $T_{\as}$ is the model completion of $T_0$ and $\mathbb X$ is saturated and strongly homogeneous, we may  embed $Y$ in $\mathbb X$ over $A\cup B$. By quantifier elimination, the image of $a$ in $\mathbb{X}$ satisfies both $p$ and $q$. Consequently, the Independence Theorem over Models holds.

To see that the $U$-rank of the theory is $1$, suppose that $p\in S_1(A)$ is a type with $U(p)\geq 1$.  Take a forking extension $q\in S_1(B)$ of $p$ and let $a\models p$.  Then $a\in B\setminus A$ and thus the condition $d(x,a)=0$ belongs to $q$.  It follows that $q$ is algebraic, whence $U(p)=1$.  Recall also that theories of $U$-rank $1$ are supersimple.  \footnote{See \cite[Definition 1.19 and Proposition 1.20]{supersimple} for a discussion of these matters in the setting of compact abstract theories, a precursor to modern continuous model theory.}

To see that $T_{\as}$ is not stable, let $p(x)$ be any $1$-type over a model $X$, let $a\in \mathbb X$ realize $p$ and take $b\in \mathbb X\setminus Xa$.  Then we can assign $d(x,b)$ to be any number in $[\frac{1}{2},1]$ and obtain an extension of $p$ to $Xb$ in this manner.  Thus, there are continuum many different nonforking extensions of $p$ to $Xb$, whence $T_{\as}$ is not stable by \cite[Theorem 14.12]{mtfms}. 
\end{proof}

The reader should contrast the previous result with the theory of the Urysohn sphere, which is not simple (see \cite[Theorem 5.4]{EG}).

\end{document}